\documentclass[12pt,a4paper]{scrartcl}
\usepackage[english]{babel}

\usepackage[section]{placeins}

\usepackage{url}

\usepackage{color}

\usepackage{amsmath,amssymb,mathrsfs,bbm,eufrak}
\usepackage{mathtools}
\usepackage{amsthm}

\usepackage{graphicx}
\usepackage{subfigure}

\usepackage{enumerate}

\usepackage[normalem]{ulem}

\usepackage{fancyhdr}

%\textheight22cm
%\textwidth155mm

%theoremstyles%%%%%%%%%%%%%%%%%%%%%%%%%%%%%%%%%%%%%%%%%%%%%%%%%%%%%%%%

\theoremstyle{plain}
\newtheorem{thm}{Theorem}[section]
\newtheorem{prop}[thm]{Proposition}
\newtheorem{lem}[thm]{Lemma}

\theoremstyle{definition}

\newtheorem{defn}[thm]{Definition}

\theoremstyle{remark}
\newtheorem{rem}[thm]{Remark}

%commands%%%%%%%%%%%%%%%%%%%%%%%%%%%%%%%%%%%%%%%%%%%%%%%%%%%%%%%%%%%%%
%number systems

\newcommand{\R}{\mathbbm{R}}

%sigma fields, filtrations

\newcommand{\filt}[1]{\mathscr{F}_{#1}}

%probability, expectation, variance, covarince
\renewcommand{\P}{\mathbf{P}}
\newcommand{\PP}[1]{\mathbf{P}\left(\, #1 \,\right)}

\newcommand{\EE}[1]{\mathbf{E}\left(\, #1 \,\right)}

\newcommand{\VV}[1]{\mathbf{Var}\left( \,#1 \,\right)}

\newcommand{\Cov}[2]{\mathbf{Cov}\left( #1 , #2 \right)}

%distributions

%indicator function

%roman letters

\newcommand{\e}{\mathrm{e}}

%epsilon, phi
\newcommand{\eps}{\varepsilon}

%armax

%%%%%%%%%%%%%%%%%%%%%%%%%%%%%%%%%%%%%%%%%%%%%%%%%%%%%%%%%%%%%%%%%%%%%%

\begin{document}

\title{Pinned diffusions and Markov bridges}
\author{
Florian HILDEBRANDT
\footnote{Department of Mathematics, University of Hamburg, Bundesstra\ss e 55, 20146 Hamburg,
Germany}\ ,\ 
Sylvie R\OE LLY
\footnote{Institut f\"ur Mathematik der Universit\"at Potsdam,
Karl-Liebknecht-Stra\ss e 24-25, 14476 Potsdam OT Golm, Germany}
}
\date{}

\maketitle{}

\begin{abstract}
\noindent \textbf{Abstract.} In this article we consider a family of real-valued diffusion processes on the time interval $[0,1]$ indexed by their prescribed initial value $x \in \R$ and another point in space, $y \in \R$. We first  present an {\em easy-to-check} condition on their drift and diffusion coefficients ensuring that the diffusion is pinned in $y$ at time $t=1$.
Our main result then concerns the following question: can this family of pinned diffusions be obtained as the bridges either of a Gaussian Markov process or of an It\^o diffusion? 
We eventually illustrate our precise answer with several examples.
\end{abstract}

\noindent \textbf{Key words and phrases} : pinned diffusion, $\alpha$-Brownian bridge, $\alpha$-Wiener bridge, Gaussian Markov process, reciprocal characteristics.

\vspace{6mm}

% classification AMS ---------------------------------------------------------
\noindent \textbf{2010 Mathematics Subject Classification} : 60 G 15,
60 H 10, 60 J 60.
% fin classification AMS -----------

\section{Introduction}

In this article we consider for any $x,y \in \R$ the real-valued stochastic process $(X^{xy}_t)_{t \in [0,1)}$  defined as the solution to the stochastic differential equation (SDE)
\begin{equation} \label{eq:SDE}
\begin{dcases}
dX_t = h(t)(y-X_t)\, dt + \sigma(t)\,dB_t,\quad t \in [0,1)\\
X_0 = x
\end{dcases}
\end{equation}
where  $h$ and $\sigma$ are time functions and $B$ is a standard Brownian motion. \\
We are concerned in particular with the situation where the diffusion is degenerated at time $1$ in the sense that 
$ \lim_{t \to 1} X^{xy}_t= y \quad \P $-a.s. for any $y\in \R$.
In such a case, we say that the diffusion is {\em pinned}.\\

A celebrated basic example of such a family of diffusions is given by the Brownian bridges, obtained by setting $h(t) = \frac{1}{1-t},\, t \in [0,1)$ and $\sigma \equiv 1$.
Another more sophisticated example studied during the last decade  concerns the function $h(t) = \frac{\alpha}{1-t},\,t\in [0,1),$ for some $\alpha>0$. The resulting process is called \emph{$\alpha $-Wiener bridge}, \emph{scaled Wiener bridge} or \emph{$\alpha$-Brownian bridge} in the literature, see e.g.~\cite{Mansuy04}, \cite{Barczy10}. 
 For reasons we will explain in Section 4 we prefer to introduce the new terminology \emph{$\alpha$-pinned Brownian diffusion} for these processes. 
In  \cite{Li16}, considering the case of an arbitrary $h$ but  $\sigma \equiv 1$,  the author calls them \emph{generalized Brownian bridges} and studies their associated Cameron-Martin spaces. A generalization to manifold-valued processes is treated in \cite{Li17}.\\

In Section 2 of this paper we present a set of simple (non-)integrability conditions (A1)-(A2) on the functions $h$ and $\sigma$ that ensure that the solution of \eqref{eq:SDE} is pinned in $y$ at time $t=1$. 
Our criteria are easy to check and complement these of \cite{BarczyKern14}, see Remark \ref{rem:BK}. In \cite{Li16}, the case $\sigma \equiv 1$ is treated via an appropriate approximation of the identity.\\

Brownian bridges were originally obtained by {\em pinning} a Brownian motion at initial and terminal time. This means, using its Markovian characterization, the (well known) transition density of a Brownian bridge between $x$ and $y$ is given for any  $0\leq s <t<1 $ and $ u,v\in \R$ by  
\mbox{ $p^y(s,u;t,v)= \displaystyle \frac{p(s,u;t,v)p(t,v;1,y)}{p(s,u;1,y)}$}
where $p$ is the  transition density of Brownian motion.
Using this property as definition for the bridges of a Markov process, it is natural to address the following question in our much more general context:

\begin{quote}
Consider the family of pinned diffusions $\{X^{xy},\, x,y \in \R\}$ solving \eqref{eq:SDE} with two given functions $h$ and $\sigma$. Is it possible to find a continuous Markov process $Z$ whose family of bridges coincides with $\{X^{xy},\, x,y \in \R\}$?
\end{quote}

In Section 3 we present a complete answer while imposing on $Z$ to belong to specific classes of processes. First we suppose in Theorem \ref{thm:GM} that $Z$ is a Gaussian process and use as tool the fact that each centered continuous Gaussian Markov process can be represented as a space-time scaled Brownian motion. Then, we treat in Theorem \ref{thm:Ito} the complementary diffusion setting supposing that $Z$ is an It\^o diffusion satisfying 
$$
dZ_t=  b(t,Z_t)\,dt+\rho(t,Z_t)\,dB_t
$$
 where $b$ and $\rho$ are smooth functions. In this framework, our method relies on the computation of the so-called \emph{reciprocal characteristics} $(F_{b,\rho},\rho^2)$ associated to $Z$. These two space-time functions are a well adapted tool since they are invariant inside (and characterize in some sense) the whole family of bridges of $Z$ (see Proposition \ref{thm:Clark} for more details).\\
Some related problems have been partially studied in the literature, but only in a Gaussian framework  and only for particular pinned Brownian diffusions: Mansuy proved in \cite{Mansuy04} that the $\alpha$-pinned Brownian diffusion from $x=0$ to $y=0$ is not the 0-0-bridge of a time-homogeneous Gaussian Markov process. Barczy and Kern studied a similar question for a general $\alpha$-pinned Brownian diffusion from 0 to 0, see \cite{BarczyKern11}.\\

In Section 4 we eventually  treat a number of examples of pinned diffusions with the methods developed in this article. These examples include $\alpha$-pinned Brownian diffusions and $F$-Wiener bridges.\\
The consideration of similar questions for multidimensional diffusions is much more trickier since the set of reciprocal characteristics associated with a vector-valued It\^o-diffusion becomes much larger: it contains space-time functions describing the failure of the drift being in gradient form. We postpone this study to a forthcoming paper.

%%%%%%%%%%%%%%%%%%%%%%%%%%%%%%%%%%%%%%%%%%%%%%%%%%%%%%
\section{The framework: Families of pinned diffusions}
%%%%%%%%%%%%%%%%%%%%%%%%%%%%%%%%%%%%%%%%%%%%%%%%%%%%%%%
%%%%%%%%%%%%%%%%%%%%%%%%%%%%%%%%%%%%%%%%%%%%%%%%%%%%%%%

Throughout, we work on a filtered probability space $(\Omega,\filt{},(\filt{t})_{t\in [0,1]},\P)$ satisfying the usual conditions.
$B$ denotes a standard Brownian motion with respect to $(\filt{t})$. 
In the whole article we suppose that $h$ and $\sigma$ are two continuous functions defined on $[0,1)$ and that $\sigma$ is positive. These conditions are satisfied by all of the basic examples recalled in the introduction.\\

We consider the family of processes $\{X^{xy},\, x,y \in \R\}$ where $(X^{xy}_t)_{t \in [0,1)}$ is  the unique strong solution to the SDE \eqref{eq:SDE}. It is straightforward to verify that 
\begin{equation} \label{eq:sol}
X^{xy}_t = \phi(t) x+ (1-\phi(t) )y+\phi(t) \int_0^t \frac{\sigma(r)}{\phi(r)}\, dB_r \,,\quad t \in [0,1),
\end{equation}
where
\begin{equation} \label{eq:phi}
\phi(t) := \exp \Big( - \int_0^t h(r)\,dr \Big)
\end{equation}
is a 
function of class $\mathcal{C}^1$ on $[0,1)$.
Since the integrand in the stochastic integral in \eqref{eq:sol} is deterministic and locally bounded, $X^{xy}$ is a Gaussian process with first and second moments
\begin{align}
\EE{X^{xy}_t} &= \phi(t)\, x + (1-\phi(t))\,y,\quad t \in [0,1), \label{eq:exp}\\
\Cov{X^{xy}_s}{X^{xy}_t} &= \phi(s) \, \phi(t) \int_0^s \frac{\sigma^2(r)}{\phi^2(r)}\,dr \, ,\quad 0\leq s\leq t<1.\label{eq:cov}
\end{align}

We are interested in the following property.

\begin{defn}
The family of processes $\{X^{xy},\, x,y \in \R\}$ given by \eqref{eq:sol} is called a \emph{family of pinned diffusions} if for all $x,y \in \R$,
\begin{equation} \label{eq:pinn}
\PP{\lim_{t \to 1}X_t^{xy} = y}=1 .
\end{equation}
\end{defn}

\subsection{A simple condition ensuring pinning}
We now show that the following additional assumptions on $h$ and $\sigma$ ensure the pinning property \eqref{eq:pinn}.
\begin{itemize}
\item[(A1)] $\qquad h$ is bounded from below and $ \quad\displaystyle \lim_{t\to 1} \int_0^t h(r)\,dr = + \infty.$
\item[(A2)] $\qquad \lim_{t\to 1}\int_0^t\sigma^2(r)\,dr < +\infty $. 
\end{itemize}

\begin{prop} \label{thm:pinn}
If Assumptions {\em (A1)-(A2)} are satisfied then the equation \eqref{eq:sol} defines a family of pinned diffusions.
\end{prop}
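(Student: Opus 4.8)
The plan is to split $X^{xy}_t$ into its deterministic mean and a centered Gaussian part and to treat the two separately; the mean is harmless, so that the entire difficulty lies in the \emph{almost sure} convergence of the stochastic part. By \eqref{eq:exp} the mean equals $\phi(t)x+(1-\phi(t))y$, and Assumption~(A1) forces $\int_0^t h\to+\infty$, hence $\phi(t)=\exp(-\int_0^t h)\to0$ and the mean tends to $y$. It remains to show that the centered part
$$
N_t:=\phi(t)\int_0^t\frac{\sigma(r)}{\phi(r)}\,\d B_r
$$
converges to $0$ almost surely as $t\to1$. By \eqref{eq:cov}, $\V(N_t)=\phi^2(t)\int_0^t\sigma^2/\phi^2$, and a dominated-convergence argument (using $h\ge-c$ to bound $\phi^2(t)/\phi^2(r)=\exp(-2\int_r^t h)\le e^{2c}$ and (A2) to supply the integrable dominating function $e^{2c}\sigma^2$) gives $\V(N_t)\to0$ at once; but this yields only convergence in probability, so the crux is the upgrade to an almost sure statement.

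To that end I would remove the singular factor $1/\phi$ from the stochastic integral by an integration by parts. Set $G_t:=\int_0^t\sigma(r)\,\d B_r$. Since $\langle G\rangle_t=\int_0^t\sigma^2\to\int_0^1\sigma^2\,\d r<\infty$ by (A2), $G$ is an $L^2$-bounded continuous martingale on $[0,1)$ and hence converges almost surely to a finite limit $G$ (equivalently, by Dambis--Dubins--Schwarz, $G_t=\tilde W_{\langle G\rangle_t}$ with a convergent clock). Writing the stochastic integral as $\int_0^t\phi(r)^{-1}\,\d G_r$ and integrating by parts against the $\mathcal C^1$ finite-variation function $1/\phi$, for which $\d(1/\phi)=(h/\phi)\,\d r$ because $\phi'=-h\phi$, I obtain the key representation
$$
N_t=G_t-\int_0^t G_r\,w_t(\d r),\qquad w_t(\d r):=\phi(t)\frac{h(r)}{\phi(r)}\,\d r,
$$
where the signed weight has total mass $\int_0^t w_t(\d r)=\phi(t)\big(\phi(t)^{-1}-1\big)=1-\phi(t)\to1$.

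It then becomes a purely analytic task to show $\int_0^t G_r\,w_t(\d r)\to G$, after which $N_t\to G-G=0$. Two facts drive this. First, for each fixed $\delta<1$ the old mass disappears: $\int_0^\delta|w_t|(\d r)=\phi(t)\int_0^\delta|h|/\phi\to0$ because $\phi(t)\to0$. Second, the total variation stays bounded uniformly in $t$, namely $\int_0^t|w_t|(\d r)\le K<\infty$; this is exactly where \emph{$h$ bounded from below} enters, since writing $|h|\le h+2c$ gives $\int_0^t|w_t|\le(1-\phi(t))+2c\,\phi(t)\int_0^t\phi^{-1}$, and $\phi(t)\int_0^t\phi(r)^{-1}\,\d r=\int_0^t\exp(-\int_r^t h)\,\d r\le\int_0^t e^{c(t-r)}\,\d r\le e^{c}$. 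Because the weights thus concentrate near $r=1$ with bounded total variation while $G_r\to G$, a standard weighted-average estimate closes the argument: splitting at $\delta$, the contribution on $[0,\delta]$ vanishes (mass $\to0$, with $G$ bounded there) and the contribution on $[\delta,t]$ lies within $K\sup_{r\ge\delta}|G_r-G|$ of $G$, which tends to $0$ as $\delta\to1$. I expect the main obstacle to be precisely this last step --- keeping the estimates honest for the \emph{signed} weights $w_t$ and securing the uniform total-variation bound --- rather than the easier (but merely in-probability) variance computation. The textbook alternative, applying the law of the iterated logarithm to the time-changed martingale $\int_0^t(\sigma/\phi)\,\d B$, seems more delicate here, as it would require the sharper control $\V(N_t)\,\log\log\!\big(\int_0^t\sigma^2/\phi^2\big)\to0$.
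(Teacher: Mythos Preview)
Your argument is correct and is essentially the paper's own proof: both reduce to showing $\phi(t)\int_0^t(\sigma/\phi)\,dB_r\to0$ a.s., integrate by parts against $1/\phi$ to obtain the representation $G_t-\int_0^t G_r\,w_t(dr)$ with $G_t=\int_0^t\sigma\,dB_r$ (the paper's $N_t$) and the same kernel $w_t(dr)=h(r)\phi(t)\phi(r)^{-1}\,dr$, and then use the a.s.\ convergence of $G$ on $[0,1]$ together with the uniform total-variation bound on $w_t$ coming from $|h|\le h+2c$. The only cosmetic difference is that the paper packages the final limit as a direct three-term estimate involving $|G_t-G_r|$ on $[t_0,t]$, which is the same ``weights concentrate near $r=1$ with bounded total variation'' argument you describe.
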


\begin{proof}
Due to Assumption (A1) the following assertion holds: $\lim_{t\to 1} \phi(t)= 0$.
Therefore it only remains to show that
\begin{equation} \label{eq:ToShow-1}
\lim_{t\to 1}\phi(t) \, M_t = 0 \quad\text{a.s.}
\end{equation}
where $M_t := \displaystyle \int_0^t \frac{\sigma(r)}{\phi(r)}\,dB_r,\, t \in [0,1)$. 
We adapt the argument used in \cite[Proposition 1]{Li16} in a slightly simpler framework, where the function $\sigma$ is indeed constant.
Define 
$N_t:=\int_0^t \sigma(r)\,dB_r,\, t \in [0,1),$
which --- in view of assumption (A2) --- extends to a 
continuous $L^2$-martingale on $[0,1]$. 
The integration by parts formula for the product of continuous semimartingales leads to 
$$
d\left( \phi(t)^{-1}N_t \right)= dM_t - N_t \frac{\phi'(t)}{\phi^2(t)}\,dt.
$$ 
Solving for $M_t$ and integrating yields
\begin{align*}
M_t &= \frac{N_t}{\phi(t)} + \int_0^t N_r \, \frac{\phi'(r)}{\phi^2(r)}\, dr\\
& =\frac{N_t}{\phi(t)} -\int_0^t N_r \,  \frac{h(r)}{\phi(r)} \, dr
\end{align*} 
and therefore, 
\begin{equation*} \label{eq:phiM}
\phi(t)M_t=N_t-\int_0^t N_r \, G_t(r)\,dr,
\end{equation*}
where $G_t(r):= \displaystyle h(r) \frac{\phi(t)}{\phi(r)}= h(r)\exp\left(-\int_r^t h(s)\,ds \right)$. 

Now, fix $0<t_0<1$ and consider $t\in (t_0,1)$.
\begin{align} \label{equ:8}
\left|N_t-\int_0^t N_r \, G_t(r)\,dr\right| 
&\leq 
|N_t| | 1 - \int^t_{t_0} G_t(r)\,dr | \nonumber\\
& +
\left|\int_{t_0}^{t} (N_t - N_r) \, G_t(r)\,dr \right| 
+ \left| \int_0^{t_0} N_r \, G_t(r)\,dr \right|.
\end{align}
Since
$\displaystyle \int_{t_0}^{t}G_t(r)\,dr= \displaystyle  1-\frac{\phi(t)}{\phi(t_0)}$
the first term vanishes when $t$ tends to 1.
To treat the second term take $m\geq 0$ such that $-m\leq h(t)$ for all $t\in [0,1)$. Then $|h|\leq m+h$ and therefore,  
\begin{align*}
\int_{t_0}^t|G_t(r)|\,dr&\leq m\int_{t_0}^t \exp\left( -\int_r^t h(s)\,ds\right)\,dr+\int_{t_0}^tG_t(r)\,dr\\
&\leq m(t-t_0)\e^m+1-\frac{\phi(t)}{\phi(t_0)}
\end{align*}
which remains bounded for $t \to 1$. 
Now, it follows from the uniform continuity of $t\mapsto N_t$ on {$[0,1]$} that the second term in \eqref{equ:8} can be made arbitrarily small, as soon as $t_0$ is chosen close to 1.
The last term is bounded by 
$\displaystyle \phi(t)\int_0^{t_0}\frac{|N_r \,  h(r)|}{\phi(r)}\,dr $ which vanishes when $t$ tends to 1.
This completes the proof of \eqref{eq:ToShow-1}.
\end{proof}

\begin{rem} \label{rem:hnopositive}
\begin{itemize}
\item
The assumption that $h$ is bounded from below is used in the above proof to control the second term in the upper bound \eqref{equ:8}. Indeed, to suppose only 
boundedness of  $\displaystyle \int_{t_0}^t |G_t|(r)dr= \displaystyle \int_{t_0}^t |h(r)|\exp\left(-\int_r^t h(s)\,ds \right)dr$ for $t\in [t_0,1)$
would be enough to conclude, but this condition could be difficult to check. 
\item
The divergence of the integral in Assumption (A1) is indeed a necessary condition to ensure that the diffusion is pinned: Recall that convergence in probability for Gaussian random variables implies $L^1$-convergence. Hence, due to \eqref{eq:exp}, to ensure \eqref{eq:pinn} it is necessary that $\lim_{t \to 1} \phi(t)=0$ holds. The latter is equivalent to $\lim_{t\to 1} \int_0^t h(r)\,dr=+\infty$.
\end{itemize}
\end{rem}

\vspace{3mm}

\begin{rem} \label{rem:BK}
Let us compare our result with the one from Barczy and Kern \cite[Proposition 2.4]{BarczyKern14}.
They show that the diffusion $X^{xy}$ is pinned if one replaces our assumption (A2) by the following condition on $h$ and $\sigma$:
\begin{itemize}
\item[(A2')] There exists $\delta \in (0,1)$ such that 
$$
 \sup_{t\in[0,1)} \displaystyle \e^{-2\delta \int_0^t h(s)\,ds } \int_0^t \e^{2 \int_0^r h(s)\,ds} \sigma^2(r)\,dr < + \infty .
$$
\end{itemize}
Clearly, for bounded $\sigma$, Assumption (A2) is trivial to check unlike condition (A2'). 
Furthermore, there are situations in which Assumption (A2) is satisfied while (A2') does not hold: Take $\sigma \equiv 1$ and $h(t):= \frac{2-t}{(1-t)^2},\ t \in [0,1).$ 
Assumption (A1) is satisfied since $\int_0^th(s)\,ds = \frac{1}{1-t}+\log \frac{1}{1-t}-1\overset{t \to 1}{\longrightarrow} + \infty.$ Assumption (A2) is also satisfied.
Therefore, we are dealing with a family of pinned diffusions. 
On the other hand, for  $t \in [0,1)$, 
\begin{multline*}
\int_0^t \e^{2 \int_0^r h(s)\,ds}\,dr 
=\int_0^t \e^{\frac{2}{1-r}-2}\frac{1}{(1-r)^2}\,dr
=\int_1^{\frac{1}{1-t}} \e^{2r-2}\,dr= \frac{1}{2}\left(\e^\frac{2t}{1-t}-1\right). 
\end{multline*}
For any $\delta \in (0,1)$, it follows that
$$
\displaystyle  \e^{-2\delta \int_0^t h(s)\,ds } \int_0^t \e^{2 \int_0^r h(s)\,ds} \,dr =
\frac{1}{2}(1-t)^{2\delta}\e^{-\frac{2\delta t}{1-t}}
\left(\e^\frac{2t}{1-t}-1\right) \overset{t \to 1}{\longrightarrow}+\infty.
$$
Consequently (A2') is not satisfied.
\end{rem}

%%%%%%%%%%%%%%%%%%%%%%%%%%%%%%%%%%%%%%%%%%%%%
\section{Identification of bridge processes}
%%%%%%%%%%%%%%%%%%%%%%%%%%%%%%%%%%%%%%%%%%%%
%%%%%%%%%%%%%%%%%%%%%%%%%%%%%%%%%%%%%%%%%%%%
In this section we examine for which choices of $h$ and $\sigma$ a family of pinned diffusions $\{X^{xy},\,x,y\in \R\}$ can be obtained as the family of bridges of a single Markov process $Z$. The following theorem answers this question if $Z$ is assumed to be Gaussian. 
\begin{thm} \label{thm:GM}
Assume that the processes $\{X^{xy},\, x,y\in \R\}$ defined by \eqref{eq:sol} constitute a family of pinned diffusions (e.g.~if (A1)-(A2) are satisfied). They correspond to the bridges of a non-degenerate Gaussian Markov process $Z$ if and only if 
\begin{equation} \label{eq:Sigma}
\Sigma:= \lim_{t \to 1}\int_0^t \sigma^2(r)\,dr < + \infty
\end{equation}
and the function $h$ is related to $\sigma$ as follows:
\begin{equation} \label{eq:hFbridge}
h(t)= \frac{\sigma^2(t)}{\Sigma- \int_0^t \sigma^2(r)\,dr},\quad t\in [0,1).
\end{equation}
In this case the original process $Z$ follows the dynamic $dZ_t = \sigma(t)\,dB_t$.
\end{thm}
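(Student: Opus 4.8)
The plan is to use the classical representation of a centered continuous Gaussian Markov process as a space--time scaled Brownian motion: its covariance factorises as $\Cov{Z_s}{Z_t}=g(s)\,g(t)\,\tau(s\wedge t)$ for a continuous function $g$ and a continuous increasing time change $\tau$, the non-degeneracy guaranteeing $g>0$ and $\tau$ strictly increasing. Since the bridge of $Z$ from $x$ to $y$ is $Z$ conditioned on its values at $t=0$ and $t=1$, and since a non-zero mean of $Z$ only shifts the bridge by a deterministic function while leaving its covariance unchanged and its mean affine in $(x,y)$, I would reduce to computing the conditional mean and covariance of $g(t)\,W_{\tau(t)}$ given $(W_{\tau(0)},W_{\tau(1)})$. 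Through the time change these are just the elementary Brownian bridge formulas on $[\tau(0),\tau(1)]$, and the whole proof amounts to matching them with the moments \eqref{eq:exp} and \eqref{eq:cov} of $X^{xy}$. Throughout I write $V(t):=\int_0^t\sigma^2(r)\,dr$.

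For sufficiency I would argue by direct substitution. Under \eqref{eq:hFbridge}, definition \eqref{eq:phi} integrates to $\phi(t)=(\Sigma-V(t))/\Sigma$. Choosing $Z_t=\int_0^t\sigma(r)\,dB_r$, a time-changed Brownian motion with covariance $V(s\wedge t)$, its bridge from $x$ to $y$ has mean $x\,(1-V(t)/\Sigma)+y\,V(t)/\Sigma$ and covariance $V(s)(\Sigma-V(t))/\Sigma$ for $s\le t$. Inserting $\phi(t)=(\Sigma-V(t))/\Sigma$ into \eqref{eq:exp}, and using the elementary identity $\int_0^s\sigma^2(r)/\phi^2(r)\,dr=\Sigma\,V(s)/(\Sigma-V(s))$ in \eqref{eq:cov}, I expect both moments to agree with those of the bridge, which identifies $X^{xy}$ with it.

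For necessity, suppose $\{X^{xy}\}$ is the family of bridges of such a $Z$. Matching the bridge mean, which is affine in $(x,y)$, with \eqref{eq:exp} forces its coefficients of $x$ and $y$ to equal $\phi(t)$ and $1-\phi(t)$; in the representation this reads $g(t)(1-\theta(t))=g(0)\phi(t)$ and $g(t)\theta(t)=g(1)(1-\phi(t))$, with $\theta(t):=(\tau(t)-\tau(0))/(\tau(1)-\tau(0))$. Substituting these into the bridge covariance $(\tau(1)-\tau(0))\,g(s)g(t)\,\theta(s)(1-\theta(t))$ and equating with \eqref{eq:cov} collapses every factor of $g$, $\theta$ and $\tau$ into the single positive constant $C:=(\tau(1)-\tau(0))\,g(0)\,g(1)$, leaving
\begin{equation*}
\phi(s)\int_0^s \frac{\sigma^2(r)}{\phi^2(r)}\,dr = C\,\bigl(1-\phi(s)\bigr),\qquad s\in[0,1).
\end{equation*}
Solving for the integral and differentiating gives $\phi'(s)=-\sigma^2(s)/C$, so integrating with $\phi(0)=1$ yields $\phi(t)=1-V(t)/C$. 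The pinning property imposes $\phi(t)\to0$, hence $V(t)\to C$, which proves $\Sigma=C<+\infty$, i.e.\ \eqref{eq:Sigma}, and then $h=-\phi'/\phi$ recovers \eqref{eq:hFbridge}. I expect the necessity part to be the main obstacle: the representation $(g,\tau)$ is determined only up to a rescaling gauge and $Z$ may carry a mean, so the delicate point is to isolate gauge-invariant relations. The argument is unlocked by the observation that the several free constants enter the covariance identity only through the single product $C$, after which the conclusions are forced.
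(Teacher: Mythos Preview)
Your proposal is correct and follows essentially the same route as the paper: both invoke the space--time scaled Brownian representation of a continuous Gaussian Markov process, compute the conditional mean and covariance of the bridge, match them against \eqref{eq:exp}--\eqref{eq:cov}, and arrive at the identity $\phi(s)\int_0^s\sigma^2/\phi^2\,dr=C(1-\phi(s))$ which, upon differentiation and use of $\phi(0)=1$, $\phi(1^-)=0$, forces $\phi(t)=1-V(t)/\Sigma$ and hence \eqref{eq:hFbridge}. The only cosmetic difference is that the paper represents $Z^x|_{[\eps,1]}$ and conditions on the terminal value alone (so the single constant is identified as $u(1)$), whereas you condition on both endpoints and observe that all gauge freedom collapses into $C=(\tau(1)-\tau(0))g(0)g(1)$; the resulting equations are identical.
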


For the proof we need the following representation of continuous Gaussian Markov processes.

\begin{lem} \label{lem:GM}
Let $Z= (Z_t)_{t\in [a,b]}$ be a non-degenerate continuous Gaussian Markov process. %satisfying $\VV{X_t}>0$ for each $t \in [a,b]$. 
Then there exist three continuous functions $m,u,v:[a,b]\to \R$ such that 
$$
Z\overset{(d)}{=} m(\cdot)+u(\cdot)\hat B_{v(\cdot)}  ,
$$
where $u$ and $v$ are strictly positive, $v$ is non-decreasing and $\hat B$ is a Brownian motion.
\end{lem}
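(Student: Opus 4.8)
The plan is to reduce the structural description of a non-degenerate continuous Gaussian Markov process to the known covariance structure of such processes. First I would observe that since $Z$ is Gaussian with continuous sample paths, the mean function $m(t) := \EE{Z_t}$ is automatically continuous, and subtracting it reduces everything to the centered process $Y := Z - m$. Thus the core of the proof is the representation of a \emph{centered} non-degenerate continuous Gaussian Markov process as $u(\cdot)\,\hat B_{v(\cdot)}$.

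Next I would exploit the Markov property at the level of the covariance. Write $K(s,t) := \Cov{Y_s}{Y_t}$. A classical fact is that a centered Gaussian process is Markov if and only if its covariance satisfies the triangular (or multiplicative) factorization
\begin{equation*}
K(s,t) = u(s)\,w(t),\qquad a\le s\le t\le b,
\end{equation*}
for suitable functions $u,w$; the Markov property forces $K(s,t)K(t,t) = K(s,t)\cdot K(t,t)$-type identities that collapse the kernel into this product form (this is the content of the Doob--Hájek--type characterization). Non-degeneracy guarantees $K(t,t)>0$ for each $t$, so $u$ and $w$ never vanish, and continuity of the paths transfers to continuity of $K$ and hence of $u,w$; one may normalize the signs so that $u$ and $w$ are strictly positive. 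Setting $v := w/u$ on the diagonal, the variance is $K(t,t) = u(t)^2 v(t)$, and the covariance becomes $K(s,t) = u(s)u(t)\,v(s\wedge t)$, which is exactly the covariance of $u(\cdot)\,\hat B_{v(\cdot)}$ for a standard Brownian motion $\hat B$.

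It then remains to check that $v$ is non-decreasing and that both $u$ and $v$ inherit continuity and strict positivity. Monotonicity of $v$ follows from the non-negative-definiteness of $K$: evaluating the $2\times 2$ Gram determinant on times $s<t$ yields $K(s,s)K(t,t)-K(s,t)^2 = u(s)^2u(t)^2\big(v(s)v(t)-v(s)^2\big)\ge 0$, which after dividing by the strictly positive prefactor gives $v(s)\big(v(t)-v(s)\big)\ge 0$, forcing $v(t)\ge v(s)$. Strict positivity of $v$ comes from non-degeneracy ($v(t)=K(t,t)/u(t)^2>0$), and continuity of $u,v$ is read off from the continuity of $K$ together with $u(t)^2 = K(t,t)/v(t)$ and the factorization. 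Finally, two centered Gaussian processes with the same covariance have the same law, so $Y \overset{(d)}{=} u(\cdot)\,\hat B_{v(\cdot)}$, and adding back $m$ completes the identification.

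I expect the main obstacle to be the justification of the \emph{product factorization} $K(s,t)=u(s)w(t)$ of the covariance from the Markov property: one must argue carefully that the conditional expectation $\EE{Y_t \mid Y_s}$ is linear in $Y_s$ (true in the Gaussian setting) and that the tower property across three time points $r<s<t$ forces the kernel to degenerate into a rank-one product off the diagonal, while handling the degenerate case where some $K(s,t)$ might vanish. The remaining monotonicity and regularity checks are routine once this factorization is in hand.
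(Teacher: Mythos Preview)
Your approach is correct and coincides with the paper's: both subtract the mean to reduce to the centered case, and then invoke the classical product factorization $K(s,t)=u(s)w(t)$ of the covariance of a non-degenerate Gaussian Markov process to obtain the time-changed Brownian representation. The only difference is one of presentation: the paper notes that path continuity together with Gaussianity yields $L^p$-continuity and hence continuity of the mean and covariance, and then simply cites Neveu \cite[\S 3.2]{Neveu68} for the centered representation, whereas you sketch the content of that reference (the Doob--H\'ajek factorization and the monotonicity of $v$ via the $2\times 2$ Gram determinant). Your identification of the delicate point --- justifying the rank-one factorization when some off-diagonal $K(s,t)$ could a priori vanish --- is exactly where the work lies, and is precisely what the cited reference handles; the remaining regularity and positivity checks are, as you say, routine.
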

\begin{proof}
Since a continuous Gaussian process is continuous in $L^p$ for any $p \geq 1$ the functions $t \mapsto \EE{Z_t}$ and $(s,t)\mapsto \Cov{Z_s}{Z_t}$ are continuous. Define $m$ as the first moment: $m(t):= \EE{Z_t},\,t \in [a,b]$. Then $Z-m$ is a centered and non-degenerate Gaussian Markov process having a continuous covariance function. The claimed representation now follows from \cite[\S 3.2]{Neveu68}.
\end{proof}

\begin{proof}[Proof of Theorem \ref{thm:GM}]
Suppose first there exists a non-degenerate continuous Gaussian Markov process $Z$   whose bridges are given by $\{X^{xy},\,x,y\in \R\}$. Let  $Z^x$ be the process $Z$ started in $x$. Since we assume $Z$ to be non-degenerate, $\VV{Z_t^x}>0$ holds for all $t \in (0,1]$. By Lemma \ref{lem:GM}, for any $\eps \in (0,1)$ the following representation holds:
$$
Z^x|_{[\eps,1]} \stackrel{(d)}=m(\cdot)+u(\cdot)\hat B_{v(\cdot)}
$$
where $\hat B$ is a Brownian motion, $m,u,v:[\eps,1] \to \R$ are continuous, $u$ and $v$ are strictly positive and $v$ is non-decreasing. Since $u(\cdot)\hat B_{cv(\cdot)}$ and $\sqrt{c}u(\cdot) \hat B_{v(\cdot)}$ have the same distribution for any $c>0$ we can suppose $v(1)= 1.$  Accordingly, the covariance of $Z^x$ has the representation 
$$
c(s,t):=\Cov{Z_s^x}{Z_t^x}= u(s)u(t)v(s),\quad s \leq t.
$$
It follows that expectation and covariance of the bridge process $Z^{xy}$ (corresponding to the conditional moments given $Z_1=y$) are given by (see e.g.~ \cite[p.~12]{Muirhead82})
\begin{align}
\EE{X^{xy}_t} &= m(t)+\frac{c(t,1)}{c(1,1)}(y-m(1))\nonumber\\
&=m(t)+\frac{u(t)v(t)}{u(1)}(y-m(1)), \quad t \in (0,1], \label{eq:bridge.exp}\\
\Cov{Z^{xy}_s}{Z^{xy}_t} &= c(s,t)-\frac{c(s,1)c(t,1)}{c(1,1)}\nonumber\\
&= u(s)v(s)u(t) \left(1-v(t) \right),\quad s\leq t.\label{eq:bridge.cov}
\end{align}
Comparison with \eqref{eq:exp} and \eqref{eq:cov} yields  
\begin{align*}
u(s)v(s) = \phi(s) \int_0^s \frac{\sigma^2(r)}{\phi^2(r)}\,dr,\quad
 \frac{u(s)v(s)}{u(1)} = 1-\phi(s),\quad s \in (0,1).
\end{align*}
This implies 
$$
\int_0^s \frac{\sigma^2(r)}{\phi^2(r)}\,dr = u(1) \, \frac{1-\phi(s)}{\phi(s)},\quad s \in (0,1).
$$ 
Differentiating both sides of this equality yields $\phi' = -\frac{\sigma^2}{u(1)}$ and then
 $$
\exists C \in \R, \quad \phi(t)=C -\frac{1}{u(1)}\int_0^t \sigma^2(r)\,dr.
$$ 
Since $\phi(0)=1$ and $\lim_{t \to 1}\phi(t)= 0$ (see Remark \ref{rem:hnopositive}), this implies $C=1$ and $ \Sigma = u(1)< + \infty$.  Using \eqref{eq:phi} allows to deduce the desired expression for $h$.\\
Conversely, let $Z^x$ be defined by $Z^x_t = x + \int_0^t \sigma(s)\,dB_s,\, t \in [0,1]$, and let $h$ be given by \eqref{eq:hFbridge}. In this case we obtain as an alternative representation $Z^x_t \stackrel{(d)}{=} x + u(t)\hat{B}_{v(t)},\,t \in [0,1],$ where $u(t) \equiv \sqrt{\Sigma}$ and $v(t):= \frac{1}{\Sigma}\int_0^t \sigma^2(r)\,dr$. A direct computation yields $\phi = 1-v$  and
$$\int_0^t \frac{\sigma^2(r)}{\phi^2(r)}\,dr= \Sigma \ \frac{ v(t)}{1-v(t)},\quad t \in [0,1).$$
Therefore, the moments defined by \eqref{eq:exp} and \eqref{eq:cov}, respectively \eqref{eq:bridge.exp} and \eqref{eq:bridge.cov} agree. It remains to prove that the diffusion is pinned, i.e.~to verify that \eqref{eq:pinn} holds. 
This can be done by checking conditions (A1) and (A2). (A1) holds since $h$ is positive and $\lim_{t \to 1} \phi(t)=0$ and (A2) holds by assumption.
\end{proof}
%%%%%%%%%%%%%%%%%%%%%%%%%%
\begin{rem} \label{rem:BK13}
In the particular  case where $\sigma$ extends to a continuous function on $[0,1]$, the second part of the above proof is a consequence of Theorem 3.1 and Theorem 3.2 in \cite{BarczyKern13}.
\end{rem}

\vspace{0.7cm}

We continue looking for a process $Z$ whose bridges match the family $\{X^{xy},\,x,y\in \R\}$ but we definitely complement the class of allowed $Z$ considering now It\^{o} diffusions which are a priori not Gaussian.

\begin{defn}\label{def:regIto}
A weak solution $Z  = (Z_t)_{t \in [0,1]}$ of a SDE of the form 
$$
dZ_t = b(t,Z_t)\,dt + \rho(t,Z_t)\,dB_t,
$$
is called a {\em regular It\^{o} diffusion} if the coefficients  $b$ and $\rho$ are in $ C^{1,2}([0,1]\times\R)$, if $ \rho$ is strictly positive  and if $Z$ admits a  strictly positive  transition density $p$ such that
\begin{enumerate}[(i)]
\item $p$ is differentiable in all variables;
\item For any $(t_1,z_1)\in [0,1)\times \R$, the partial derivatives of $p(t_1,z_1;\cdot,\cdot)$ exist up to order two and are continuous.
\end{enumerate}
\end{defn}
We are now able to state  our main result.
\begin{thm} \label{thm:Ito}
Suppose that the functions $h$ and $\sigma$ are of class $C^1$ on $[0,1)$ and assume that the associated processes $\{X^{xy},\,x,y\in \R\}$ given by \eqref{eq:sol} are pinned diffusions. Then $\{X^{xy},\,x,y\in \R\}$ correspond to the bridges of a regular It\^{o} diffusion $Z$ if and only if \eqref{eq:Sigma} and \eqref{eq:hFbridge} are satisfied.
In this case $Z$ is indeed Gaussian and one can choose its drift coefficient $b\equiv 0$ and its diffusion coefficient $ \rho^{2}(t,z) \equiv \sigma^{2} (t)$ as in Theorem \ref{thm:GM}.
\end{thm}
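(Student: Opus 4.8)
The plan is to deduce both implications from Theorem~\ref{thm:GM} together with the invariance of the reciprocal characteristics provided by Proposition~\ref{thm:Clark}. The overall idea is that a regular It\^o diffusion and its bridges share the same pair $(F_{b,\rho},\rho^2)$, so one can read off the constraints on $h$ and $\sigma$ by computing these two characteristics directly from the explicit affine drift of the pinned family.

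For the \emph{if} direction, assume \eqref{eq:Sigma} and \eqref{eq:hFbridge}. Then Theorem~\ref{thm:GM} already identifies $\{X^{xy},\,x,y\in\R\}$ as the bridges of the Gaussian Markov process $Z$ with dynamics $dZ_t=\sigma(t)\,dB_t$, that is, with drift $b\equiv 0$ and diffusion coefficient $\rho(t,z)=\sigma(t)$. It only remains to check that this $Z$ meets Definition~\ref{def:regIto}: here $b\equiv 0$ and $\rho=\sigma$ are smooth, and since $Z$ is Gaussian its transition density is the explicit Gaussian kernel with mean $z$ and variance $\int_s^t\sigma^2(r)\,dr$, which is strictly positive and smooth in all variables. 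This settles the \emph{if} part and simultaneously proves the final assertion that $b\equiv 0$, $\rho^2(t,z)\equiv\sigma^2(t)$ is an admissible choice.

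For the \emph{only if} direction, suppose $\{X^{xy}\}$ are the bridges of a regular It\^o diffusion $Z$ with coefficients $(b,\rho)$. First I would pin down the diffusion coefficient: Doob's $h$-transform leaves the diffusion coefficient of $Z$ unchanged and modifies only its drift, so comparing the quadratic variation of $X^{xy}$ (which is $\sigma^2(t)\,dt$) with that of the bridge (which is $\rho^2(t,\cdot)\,dt$) and using that $X^{xy}_t$ is Gaussian with full support on $\R$ for $t\in(0,1)$ forces $\rho^2(t,z)=\sigma^2(t)$ for all $z$. Thus the second reciprocal characteristic is $\sigma^2$, a function of time alone.

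The main step is the first characteristic. By Proposition~\ref{thm:Clark}, $F_{b,\rho}$ is invariant across the bridge family; since each $X^{xy}$ is itself a Markovian diffusion with drift $b^{xy}(t,z)=h(t)(y-z)$ and diffusion $\sigma(t)$, the value $F_{b^{xy},\sigma}$ must be independent of $y$. Substituting this affine-in-space drift into the expression for $F$ from Proposition~\ref{thm:Clark}, the fact that $\partial_z b^{xy}=-h$ is constant in $z$ and $\partial_{zz}b^{xy}=0$ makes everything collapse to $(y-z)$ times a purely time-dependent bracket, of the form $h'(t)-h^2(t)-\tfrac{(\sigma^2)'(t)}{\sigma^2(t)}h(t)$. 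Agreement for two distinct endpoints $y$ is possible only if this bracket vanishes, giving the Riccati equation $h'=h^2+\tfrac{(\sigma^2)'}{\sigma^2}h$. Setting $g=1/h$ linearizes it to $(\sigma^2 g)'=-\sigma^2$, whence $h(t)=\sigma^2(t)/(C-\int_0^t\sigma^2(r)\,dr)$ for some constant $C$. The pinning hypothesis requires $\int_0^1 h=+\infty$ (Remark~\ref{rem:hnopositive}); since $\int_0^t h = \log\frac{C}{C-\int_0^t\sigma^2(r)\,dr}$, divergence as $t\to 1$ occurs precisely when $C=\lim_{t\to 1}\int_0^t\sigma^2(r)\,dr=\Sigma$ is finite, which yields \eqref{eq:Sigma} and \eqref{eq:hFbridge} at once. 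I expect the principal obstacle to be the bookkeeping in evaluating $F_{b^{xy},\sigma}$ from the reciprocal-characteristic formula, in particular verifying that the time-dependence of $\sigma$ contributes exactly the term $\tfrac{(\sigma^2)'}{\sigma^2}h$; once this bracket is isolated, the remaining ODE and boundary analysis is routine.
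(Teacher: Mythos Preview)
Your proposal is correct and follows essentially the same route as the paper: apply Proposition~\ref{thm:Clark} to obtain $\rho\equiv\sigma$ and the $y$-independence of $F_{b_y,\sigma}$, extract the Riccati equation $h'=h^2+h\,(\log\sigma^2)'$, solve it, and fix the integration constant via Remark~\ref{rem:hnopositive}. The only cosmetic differences are that the paper keeps the overall factor $1/\sigma^2$ in front of the time bracket (immaterial since the bracket must vanish) and solves the ODE by an integrating-factor computation rather than your linearization $g=1/h$, while the converse implication is in both cases deferred to Theorem~\ref{thm:GM}.
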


The proof is based on the notion of  {\em reciprocal characteristics}, which was first introduced by Clark \cite{Clark90}:
For any regular It\^{o} diffusion $Z$ with coefficients $b$ and $\rho$ define the space-time function
$$
F_{b,\rho}(t,z):=\partial_t (b/\rho^2)(t,z)+\frac{1}{2}\partial_z \Big((b/\rho)^2(t,z) +\rho^2 (t,z)\partial_z (b/\rho^2)(t,z)  \Big).
$$
The function $F_{b,\rho}$ together with $\rho^2$ are called the reciprocal characteristics associated with $Z$. Clark asserts that they are invariant inside the class of It\^o diffusions which share the same bridges. For convenience of the reader we recall his precise result in the following proposition. 

\begin{prop}  \label{thm:Clark}
Let $X  = (X_t)_{t \in [0,1]}$ and $\tilde X = (\tilde X_t)_{t \in [0,1]}$ be two regular It\^{o} diffusions solving the SDEs 
\begin{align*}
dX_t &= b(t,X_t)\,dt + \rho(t,X_t)\,dB_t,\\
d\tilde X_t &=  \tilde b(t,\tilde X_t)\,dt +  \tilde \rho(t,\tilde X_t)\,d\tilde B_t.
\end{align*}
Then $X$ and $\tilde X$ share the same bridges if and only if their reciprocal characteristics coincide, that is  
\begin{equation} \label{eq:recCh}
\rho^{2} \equiv  \tilde \rho^{2} \quad \text{and}\quad F_{b,\rho}\equiv F_{ \tilde b,  \tilde \rho}.
\end{equation}
\end{prop}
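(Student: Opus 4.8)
The statement to prove is Proposition \ref{thm:Clark}, which asserts that two regular It\^o diffusions share the same bridges if and only if their reciprocal characteristics $(\rho^2, F_{b,\rho})$ coincide. Since the text attributes this to Clark \cite{Clark90}, the plan is to reconstruct the underlying argument rather than to cite it as a black box.

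\textbf{Overall strategy.} The plan is to characterize ``sharing the same bridges'' in terms of transition densities, and then to translate the equality of bridge laws into a PDE-type condition on the generators that is captured precisely by the reciprocal characteristics. The bridge of $X$ pinned at $(s,u)$ and $(t,v)$ has, by the Markov property, transition density
$$
p^{(s,u;t,v)}(r,w;r',w')=\frac{p(r,w;r',w')\,p(r',w';t,v)}{p(r,w;t,v)},\quad s\le r<r'\le t,
$$
and analogously with $\tilde p$ for $\tilde X$. Two families of bridges coincide if and only if these conditioned densities agree for all pinning data. First I would show this is equivalent to the existence of a positive space-time function $g$ with
$$
\tilde p(r,w;r',w')=\frac{g(r',w')}{g(r,w)}\,p(r,w;r',w'),
$$
i.e.\ $\tilde p$ is a Doob $g$-transform of $p$; indeed the bridge densities manifestly erase any multiplicative factor of the form $g(r',w')/g(r,w)$, and conversely equality of all bridges forces the ratio $\tilde p/p$ to factorize this way.

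\textbf{From densities to generators.} The next step is to pass from the transform relation to the infinitesimal level. Writing $L$ and $\tilde L$ for the generators associated with $(b,\rho)$ and $(\tilde b,\tilde\rho)$, the forward (Fokker--Planck) equations satisfied by $p$ and $\tilde p$ in the forward variables, combined with the $g$-transform identity, force $\rho\equiv\tilde\rho$ immediately (the second-order coefficient is unchanged by a $g$-transform, so the diffusion coefficients must match), and they express the drift difference through $g$: one obtains $\tilde b = b + \rho^2\,\partial_z\log g$ for a suitable positive $g=g(t,z)$ solving a backward equation. Thus the problem reduces to: \emph{two regular It\^o diffusions with the same $\rho$ share their bridges if and only if their drifts differ by $\rho^2\partial_z\log g$ for some positive space-time $g$}, and then to showing that \emph{this gradient-in-$\log g$ condition on the drift difference is exactly equivalent to} $F_{b,\rho}\equiv F_{\tilde b,\tilde\rho}$.

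\textbf{Identifying the invariant.} The heart of the matter, and the main obstacle, is the final algebraic-differential reduction: one must verify that the functional
$$
F_{b,\rho}=\partial_t(b/\rho^2)+\tfrac12\partial_z\!\big((b/\rho)^2+\rho^2\partial_z(b/\rho^2)\big)
$$
is left invariant precisely under the replacement $b\mapsto b+\rho^2\partial_z\log g$ (with $\rho$ fixed), and that invariance characterizes such replacements. Setting $\beta:=b/\rho^2$ and $\tilde\beta:=\tilde b/\rho^2=\beta+\partial_z\log g$, I would substitute into $F$ and expand; the plan is to show that all the new terms generated by the shift $\partial_z\log g$ cancel because they assemble into $\partial_t\partial_z\log g+\tfrac12\partial_z(\rho^2(\partial_z\log g)^2+\ldots)$ which, using $\partial_z\log g=(\partial_z g)/g$ and the backward equation for $g$, telescopes to zero. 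The delicate bookkeeping of the cross terms between $\beta$ and $\partial_z\log g$ (and the appearance of $\partial_t\partial_z$ versus $\partial_z\partial_t$) is where care is needed; the cancellation is precisely the reason $F_{b,\rho}$ was designed as it is. For the converse direction I would argue that if $F_{b,\rho}\equiv F_{\tilde b,\tilde\rho}$ and $\rho\equiv\tilde\rho$, then $\psi:=\tilde\beta-\beta$ satisfies a first-order compatibility condition forcing $\psi=\partial_z\log g$ for some positive $g$, which by the earlier equivalence yields identical bridges. Throughout, the regularity built into Definition \ref{def:regIto} (the $C^{1,2}$ coefficients and the differentiability of $p$) is exactly what legitimizes the Fokker--Planck manipulations and the construction of $g$.
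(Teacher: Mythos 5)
Your proposal is correct and follows essentially the same route as the paper's proof (which itself summarizes \cite{Thieullen02}): sharing bridges is reduced to a Doob $\mathfrak h$-transform relation, forcing $\tilde\rho\equiv\rho$ and $\tilde b\equiv b+\rho^2\partial_z\log\mathfrak h$ with $\mathfrak h$ space-time harmonic, and the equivalence with \eqref{eq:recCh} comes from the invariance of $F_{b,\rho}$ under exactly such drift shifts. You even identify the same technical points the paper flags, namely the interchange $\partial_t\partial_z=\partial_z\partial_t$ of derivatives of $p$ (condition (ii) of Definition \ref{def:regIto}) and the regularity needed for the bridge densities (condition (i)).
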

\begin{proof}
For a detailed proof under the above assumptions we refer to \cite{Thieullen02}. 
To briefly summarize, the main argument leads to the fact that $X$ and $\tilde X$ share the same bridges if and only if they are $\mathfrak h$-tranforms in the sense of Doob. Equivalently, there exists a function $\mathfrak h \in C^{1,2}([0,1)\times \R )$ such that
$$\tilde \rho \equiv \rho\quad \text{ and } \quad \tilde b \equiv b + \rho^2\,\partial_z \log \mathfrak h$$ 
where $\mathfrak h$ is space-time harmonic, i.e.~satisfies
$$
\partial_t \mathfrak h + \frac{1}{2}\rho^2 \, \partial_{zz}\mathfrak h + b \, \partial_z \mathfrak h = 0.
$$
One of the calculations in this proof requires the validity of $\partial_t\partial_y p(s,x;t,y)\linebreak=\partial_y\partial_t p(s,x;t,y)$ which justifies the presence of condition (ii) in Definition \ref{def:regIto}. The differentiability condition (i) on the first two variables is necessary to ensure regularity for the bridge transition density in the second two variables.
\end{proof}

\begin{proof}[Proof of Theorem \ref{thm:Ito}]
Assume that $\{X^{xy},\,x,y\in \R\}$ defined by \eqref{eq:sol} are the bridges of a regular It\^o diffusion $Z=(Z_t)_{t\in [0,1]}$ which solves
$$
dZ_t = b(t,Z_t)\,dt + \rho(t,Z_t)\,dB_t.
$$
Define $b_y(t,z) := h(t)(y-z)$, $t\in [0,1),z \in \R$. Proposition \ref{thm:Clark} implies that  for each $y \in \R$,  the following system of equations holds on $[0,1)\times \R$:
\begin{equation*} 
\begin{dcases}
\rho\equiv \sigma \\
F_{b,\rho}\equiv F_{b_y,\sigma}.
\end{dcases}
\end{equation*}
In particular $F_{b_y,\sigma}(t,z)= \displaystyle \frac{h'(t)\sigma^2(t)-h(t) \partial_t\sigma^2(t)-h^2(t)\sigma^2(t)}{\sigma^4(t)}\, \big(y-z\big)$ should not depend on $y$. Hence, 
$$
h'=h^2+h g \quad \textrm{ where } \quad 
g = \partial_t\log \sigma^2 .
$$
 Setting $G(t):= \int_0^t g(s)\,ds= \log \frac{\sigma^2(t)}{\sigma^2(0)},\, t \in [0,1)$, we obtain as the unique solution of this ODE
 $$
h(t)= \frac{\e^{G(t)}}{C-\int_0^t \e^{G(s)}\,ds}= \frac{\sigma^2(t)}{\tilde C-\int_0^t \sigma^2(s)\,ds}
$$ on its maximal interval of existence, where $C >0$ and $ \tilde C = \sigma^2(0)C$. Now,
\begin{align*}
\int_0^t h(s)\,ds = \log\frac{\tilde C}{\tilde C - \int_0^t \sigma^2(s)\,ds}
\end{align*}
and by Remark \ref{rem:hnopositive}, $\tilde C = \Sigma< + \infty$ is uniquely determined. \\
To prove the converse implication one follows the same argumentation  as in the proof of Theorem \ref{thm:GM}.
\end{proof}

%%%%%%%%%%%%%%%%%%
\section{Examples}
%%%%%%%%%%%%%%%%%%
%%%%%%%%%%%%%%%%%%

%%%%%%%%%%%%%%%%%%%%%%%%%%%%%%%%%%%%%%%%%%%%%%%%
\subsection{$\alpha$-pinned Brownian diffusions}
%%%%%%%%%%%%%%%%%%%%%%%%%%%%%%%%%%%%%%%%%%%%%%%%
We first consider the family $\{X^{xy},\,x,y\in \R\}$ of pinned diffusions associated with
\begin{equation*}
\sigma \equiv 1 \qquad \text{ and } \qquad h(t) = \frac{\alpha(t)}{1-t},\quad t \in [0,1),
\end{equation*}
where $\alpha : [0,1] \to \R$ is continuous and satisfies $\alpha(1)>0$. They solve the SDE
\begin{equation} \label{eq:alphaWiener}
\begin{dcases}
dX_t = \alpha(t)\,\frac{y-X_t}{1-t}\,dt +dB_t,\quad t \in [0,1),\\
X_0 =x.
\end{dcases}
\end{equation}
 In the particular case  $x=y=0$, they  were introduced by Barczy and Kern under the name of \emph{general $\alpha$-Wiener bridges}. They generalize  the so-called $\alpha$-Wiener bridges ($\alpha(t) \equiv \alpha(0)$) which were first introduced by Brennan and Schwartz \cite{Brennan90} to model the arbitrage profit associated to a given stock index future in absence of transaction costs. For a mathematical treatment of $\alpha$-Wiener bridges see also Mansuy \cite{Mansuy04}, Barczy and Pap \cite{Barczy10} and Li \cite{Li16}. \\
Barczy and Kern proved in \cite{BarczyKern11} that the process $X^{00}$ given by \eqref{eq:alphaWiener} is pinned  in 0 at time $t = 1$. They then studied whether $X^{00}$ can be obtained as the 0-0-bridge of an Ornstein-Uhlenbeck type process, i.e.~a process $Z$ satisfying $dZ_t = q(t)\,dt +\sigma(t)\,dB_t$ for continuous functions $q,\sigma:[0,1]\to \R$ where $\sigma(t)\neq 0$ for all $t \in [0,1]$. 
In Theorem 4.1 of \cite{Barczy10},  Barczy and Pap showed that for  $\alpha\neq \alpha'$ constant in time, the \textit{generalized Brownian bridges with parameters} $\alpha$ and $\alpha '$ have  singular laws. Also, refer to Example 4 of \cite{Li16} for a treatment of this question with a different approach.

\vspace{2mm}
Assumption (A2) is trivially satisfied. Let us show that Assumption (A1) is satisfied.
Since $\alpha$ is continuous on $[0,1]$ and $\alpha(1)$ is positive,  there exists $t_0 \in (0,1)$ such that $\alpha$ is bounded away from 0 on $[t_0,1]$: $$
\exists \eps>0, \, \forall t \in [t_0,1], \quad   \alpha(t)\geq \epsilon.
$$ 
This implies that for any $t \in [t_0,1)$,
$$ 
\int_0^t h(u)\,du \geq\int_0^{t_0} h(u)\,du + \eps \int_{t_0}^t \frac{du}{1-u}\overset{t \to 1}{\longrightarrow} \infty,
$$
yielding (A1). 
Thus, due to  Proposition \ref{thm:pinn}, the pinning property of the solution $X^{xy}$ of \eqref{eq:alphaWiener} is assured for any $x,y \in \R$.\\
Further, as a corollary of Theorems \ref{thm:GM} and \ref{thm:Ito}, one deduces that, if the function $\alpha$ differs from the  constant 1, the family $\{X^{xy},\, x,y \in \R\}$ of such pinned diffusions does not coincide with the bridges of any Gaussian Markov process or of any regular It\^{o} diffusion. \\
For this reason we propose to call the solution of \eqref{eq:alphaWiener} an \emph{$\alpha$-pinned Brownian diffusion} without using the word \emph{bridges} which is not well adapted to this situation.

%%%%%%%%%%%%%%%%%%%%%%%%%%%%%%%%%%%%%%%%%%%%%%%%%%%%%%%%%
\subsection{$(\alpha,\gamma)$-pinned Brownian diffusions}
%%%%%%%%%%%%%%%%%%%%%%%%%%%%%%%%%%%%%%%%%%%%%%%%%%%%%%%%%
As a modification of the previous example we consider for $\alpha > 0$ and $\gamma \geq 0$ the following SDE	
\begin{equation}
\begin{dcases}
dX_t = \alpha\,\frac{y-X_t}{(1-t)^{1+\gamma}}\,dt + dB_t,\quad t \in [0,1),\\
X_0 = x,
\end{dcases}
\end{equation}
and its solution $X^{xy}$. 
The function 
$h(t)= \alpha /(1-t)^{1+\gamma}$ clearly satisfies 
Assumption (A1). 
It thus follows from Proposition \ref{thm:pinn} that $\{X^{xy},\,x,y\in \R\}$ are pinned diffusions and 
 it makes sense to call them \emph{$(\alpha,\gamma)$-pinned Brownian diffusions}. \\
Theorem \ref{thm:GM} applies and yields that for $(\alpha,\gamma)\neq (1,0)$ the $(\alpha,\gamma)$-pinned Brownian diffusions cannot be obtained as the bridges of any Gaussian Markov process.
Theorem \ref{thm:Ito} applies, too: $\{X^{xy},\, x,y \in \R\}$ can not be obtained as the bridges of any regular It\^{o} diffusion as soon as we are not in the very particular case $(\alpha,\gamma)= (1,0)$.\\
In \cite{Li16} Example 2, the author shows that for $x=y=0$ and $\alpha=1$, this \textit{generalized Brownian bridge} has a law which is singular with respect to the law of the Brownian bridge as soon as $\gamma>0$.

%%%%%%%%%%%%%%%%%%%%%%%%%%%%%%%
\subsection{$F$-Wiener bridges}
%%%%%%%%%%%%%%%%%%%%%%%%%%%%%%%
Let $f:[0,1)\to (0,+\infty)$ be a continuous probability density function and let $F$ denote the corresponding cumulative distribution function. We now consider the SDE
\begin{equation} \label{FWiener}
\begin{dcases}
dX_t = \frac{f(t)}{1-F(t)}(y-X_t)\,dt + \sqrt{f(t)}\,dB_t,\quad t \in [0,1)\\
X_0 =x.
\end{dcases}
\end{equation}
and its solution $X^{xy}$. \\
In the particular case $x=y=0$ these processes are called $F$-Wiener bridges and play a role in statistics as weak limits of empirical processes, see e.g.~\cite{Vaart2000}.
It was shown in \cite{BarczyKern14} that $X^{00}$ is pinned in 0 at time $t = 1$.\\
We are indeed in the framework of Theorem \ref{thm:GM}: Assumptions (A1) and (A2) are satisfied with $\sigma= \sqrt{f}$ and $\Sigma= 1$.
Thus the family $\{X^{xy},\, x,y \in \R\}$ coincides with the bridges of the Gaussian Markov process 
$$
Z_t = \int_0^t \sqrt{f(s)}\,dB_s \overset{(d)}{=} B_{F(t)},\quad t \in [0,1].
$$
%which can be regarded both as a Gaussian process and a regular It\^{o} diffusion.

\subsection*{Acknowledgement} The authors thank an anonymous referee for bringing to their attention the references \cite{Li16} and \cite{Li17}.

\bibliography{ref_PinnedDif}
\bibliographystyle{ieeetr}
\end{document}